\DeclareMathOperator{\Id}{Id} 
\DeclareMathOperator{\Log}{Log}
\begin{document}

\title[Curvature tensors of K\"ahler type]
{Almost contact B-metric manifolds with curvature
tensors of K\"ahler type}

\author[M. Manev, M. Ivanova]{Mancho Manev, Miroslava Ivanova}


\frenchspacing

\newcommand{\ie}{i.e. }
\newcommand{\X}{\mathfrak{X}}
\newcommand{\W}{\mathcal{W}}
\newcommand{\F}{\mathcal{F}}
\newcommand{\T}{\mathcal{T}}
\newcommand{\LL}{\mathcal{L}}
\newcommand{\TT}{\mathfrak{T}}
\newcommand{\M}{(M,\f,\xi,\eta,g)}
\newcommand{\Lf}{(G,\f,\xi,\eta,g)}
\newcommand{\R}{\mathbb{R}}
\newcommand{\s}{\mathfrak{S}}
\newcommand{\n}{\nabla}
\newcommand{\nn}{\tilde{\nabla}}
\newcommand{\tg}{\tilde{g}}
\newcommand{\f}{\varphi}
\newcommand{\D}{{\rm d}}
\newcommand{\id}{{\rm id}}
\newcommand{\al}{\alpha}
\newcommand{\bt}{\beta}
\newcommand{\gm}{\gamma}
\newcommand{\dt}{\delta}
\newcommand{\lm}{\lambda}
\newcommand{\ta}{\theta}
\newcommand{\om}{\omega}
\newcommand{\ea}{\varepsilon_\alpha}
\newcommand{\eb}{\varepsilon_\beta}
\newcommand{\eg}{\varepsilon_\gamma}
\newcommand{\sx}{\mathop{\mathfrak{S}}\limits_{x,y,z}}
\newcommand{\norm}[1]{\left\Vert#1\right\Vert ^2}
\newcommand{\nf}{\norm{\n \f}}
\newcommand{\Span}{\mathrm{span}}
\newcommand{\grad}{\mathrm{grad}}
\newcommand{\thmref}[1]{The\-o\-rem~\ref{#1}}
\newcommand{\propref}[1]{Pro\-po\-si\-ti\-on~\ref{#1}}
\newcommand{\secref}[1]{\S\ref{#1}}
\newcommand{\lemref}[1]{Lem\-ma~\ref{#1}}
\newcommand{\dfnref}[1]{De\-fi\-ni\-ti\-on~\ref{#1}}
\newcommand{\corref}[1]{Corollary~\ref{#1}}



\numberwithin{equation}{section}
\newtheorem{thm}{Theorem}[section]
\newtheorem{lem}[thm]{Lemma}
\newtheorem{prop}[thm]{Proposition}
\newtheorem{cor}[thm]{Corollary}

\theoremstyle{definition}
\newtheorem{defn}{Definition}[section]

\hyphenation{Her-mi-ti-an ma-ni-fold ah-ler-ian}




\begin{abstract}
On 5-dimensional almost contact B-metric manifolds, the form of
any $\f$-K\"ahler-type tensor (i.e. a tensor satisfying the
properties of the curvature tensor of the Levi-Civita connection
in the special class of the parallel structures on the manifold)
is determined. The associated 1-forms are derived by the scalar
curvatures of the $\f$-K\"ahler-type tensor for the $\f$-canonical
connection on the manifolds from the main classes with closed
1-forms.
\end{abstract}

\keywords{Almost contact manifold; B-metric; natural connection;
canonical connection; K\"ahler-type tensor; totally real 2-plane;
sectional curvature; scalar curvature.}

\subjclass[2000]{53C05, 53C15, 53C50.}

\maketitle

\begin{center}
\end{center}

\section*{Introduction}

The curvature properties of the almost contact B-metric manifolds
are investigated with respect to the Levi-Civita connection $\n$
and other linear connection preserving the structures of the
manifold. A significant role play such connections, which
curvature tensors possess the properties of the curvature tensor
of $\n$ in the class with $\n$-parallel structures.

The present paper\footnote{Partially supported by project
NI11-FMI-004 of the Scientific Research Fund, Paisii Hilendarski
University of Plovdiv, Bulgaria} is organized as follows. In
Sec.~1, we give some necessary facts about the considered
manifolds.
Sec.~2 is devoted to the $\f$-K\"ahler-type tensors, i.e. the
tensors satisfying the properties of the curvature tensor of $\n$
in the special class $\F_0$.
In Sec.~3, it is determined the form of any $\f$-K\"ahler-type
tensor $L$ on a 5-dimensional manifold under consideration.
In Sec.~4, it is proved that the associated 1-forms $\ta$ and
$\ta^*$ are derived by the non-$\f$-holomorphic pair of scalar
curvatures of the $\f$-K\"ahler-type tensor for the $\f$-canonical
connection on the manifolds from the main classes with closed
1-forms.
In Sec.~5, some of the obtained results are illustrated by a known
example.

\section{Preliminaries}

Let $(M,\f,\xi,\eta,g)$ be an almost contact manifold with
B-metric or an \emph{almost contact B-metric manifold}, \ie $M$ is
a $(2n+1)$-dimensional differen\-tia\-ble manifold with an almost
contact structure $(\f,\xi,\eta)$ consisting of an endomorphism
$\f$ of the tangent bundle, a vector field $\xi$, its dual 1-form
$\eta$ as well as $M$ is equipped with a pseudo-Riemannian metric
$g$  of signature $(n,n+1)$, such that the following relations are
satisfied
\begin{gather}
\f\xi = 0,\quad \f^2 = -\Id + \eta \otimes \xi,\quad
\eta\circ\f=0,\quad \eta(\xi)=1, \label{str}\nonumber\\%
g(x, y ) = - g(\f x, \f y ) + \eta(x)\eta(y) \label{g}\nonumber
\end{gather}
for arbitrary $x$, $y$ of the algebra $\X(M)$ on the smooth vector
fields on $M$.

Further, $x$, $y$, $z$ will stand for arbitrary elements of
$\X(M)$.

The associated metric $\tilde{g}$ of $g$ on $M$ is defined by
\begin{equation*}\label{tg}
\tilde{g}(x,y)=g(x,\f y)\allowbreak+\eta(x)\eta(y).
\end{equation*}
Both metrics $g$ and $\tilde{g}$ are necessarily of signature
$(n,n+1)$. The manifold $(M,\f,\xi,\eta,\tilde{g})$ is also an
almost contact B-metric manifold.

The structural tensor $F$ of type (0,3) on $\M$ is defined by
$
F(x,y,z)=g\bigl( \left( \nabla_x
\f \right)y,z\bigr). $ 
It has the following properties:
\begin{equation*}\label{F-prop}
\begin{split}
F(x,y,z)&=F(x,z,y)
=F(x,\f y,\f z)+\eta(y)F(x,\xi,z) +\eta(z)F(x,y,\xi).
\end{split}
\end{equation*}

The following 1-forms are associated with $F$:
\begin{equation*}\label{titi}
\begin{array}{c}
\ta(z)=g^{ij}F(e_i,e_j,z),\quad \ta^*(z)=g^{ij}F(e_i,\f e_j,z),\quad
\om(z)=F(\xi,\xi,z),
\end{array}
\end{equation*}
where $g^{ij}$ are the components of the inverse matrix of $g$
with respect to a basis $\left\{e_i;\xi\right\}$
$(i=1,2,\dots,2n)$ of the tangent space $T_pM$ of $M$ at an
arbitrary point $p\in M$. Obviously, the equality $\om(\xi)=0$ and
the relation $\ta^*\circ\f=-\ta\circ\f^2$ are always valid.

A classification of the almost contact manifolds with B-metric
with respect to $F$ is given in \cite{GaMiGr}. This classification
includes eleven basic classes $\F_1$, $\F_2$, $\dots$, $\F_{11}$.
Their intersection is the special class $\F_0$ determined by
$F(x,y,z)=0$. Hence $\F_0$ is the class of almost contact B-metric
manifolds with $\n$-parallel structures, i.e.
$\n\f=\n\xi=\n\eta=\n g=\n \tilde{g}=0$.

In the present paper we consider the manifolds from the so-called
main classes $\F_1$, $\F_4$, $\F_5$ and $\F_{11}$, shortly the
$\F_i$-manifolds $(i=1,4,5,11)$. These classes are the only
classes where the tensor $F$ is expressed by the metrics $g$ and
$\tilde{g}$. They are defined as follows:
\begin{equation*}\label{Fi}
\begin{split}
\F_{1}:\quad &F(x,y,z)=\frac{1}{2n}\bigl\{g(x,\f y)\ta(\f z)+g(\f
x,\f y)\ta(\f^2 z)
\bigr\}_{(y\leftrightarrow z)};\\[4pt]
\F_{4}:\quad &F(x,y,z)=-\frac{\ta(\xi)}{2n}\bigl\{g(\f x,\f y)\eta(z)+g(\f x,\f z)\eta(y)\bigr\};\\[4pt]
\F_{5}:\quad &F(x,y,z)=-\frac{\ta^*(\xi)}{2n}\bigl\{g( x,\f y)\eta(z)+g(x,\f z)\eta(y)\bigr\};\\[4pt]
\F_{11}:\quad
&F(x,y,z)=\eta(x)\left\{\eta(y)\om(z)+\eta(z)\om(y)\right\},
\end{split}
\end{equation*}
where (for the sake of brevity) we use the denotation
$\{A(x,y,z)\}_{(y\leftrightarrow z)}$ --- instead of
$\{A(x,y,z)+A(x,z,y)\}$ for any tensor $A(x,y,z)$.

Let us remark that the class
$\F_1\oplus\F_4\oplus\F_5\oplus\F_{11}$ is the odd-dimensional
analogue of the class $\W_1$ of the conformal K\"ahler manifolds
of the almost complex manifold with Norden metric, introduced in
\cite{GrMeDj}.

\section{Curvature-like tensors}

Let $R=\left[\n,\n\right]-\n_{[\ ,\ ]}$ be the curvature
(1,3)-tensor of the Levi-Civita connection $\nabla$. We denote the
curvature $(0,4)$-tensor by the same letter: $R(x,y,z,w)$
$=g(R(x,y)z,w)$.

The Ricci tensor $\rho$ and the scalar curvature $\tau$ for $R$ as
well as their associated quantities are defined respectively by
\begin{equation*}\label{rho}
\begin{array}{ll}
    \rho(y,z)=g^{ij}R(e_i,y,z,e_j),\qquad
    &\tau=g^{ij}\rho(e_i,e_j),\\[4pt]
    \rho^*(y,z)=g^{ij}R(e_i,y,z,\f e_j),\qquad
    &\tau^*=g^{ij}\rho^*(e_i,e_j).\\[4pt]
\end{array}
\end{equation*}

\begin{defn}[\cite{ManGri2}]
Each (0,4)-tensor $L$ on $\M$ having the following properties is
called a \emph{curvatu\-re-like tensor}:
\begin{gather}
    L(x,y,z,w)=-L(y,x,z,w)=-L(x,y,w,z),\label{R1}
    \\%
    \sx L(x,y,z,w)=0. 
    \label{R2}
\end{gather}
\end{defn}

The above properties are a characteristic of the curvature tensor
$R$.

Similarly of \eqref{rho}, the Ricci tensor, the scalar curvature
and their associated quantities are determined for each
curvatu\-re-like tensor $L$.

\begin{defn}[\cite{ManGri2}]
A curvature-like tensor $L$ on $\M$ is called a
\emph{$\f$-K\"ahler-type tensor} if it satisfies the condition
\begin{equation}\label{LK}
L(x,y,\f z,\f w)=-L(x,y,z,w).
\end{equation}
\end{defn}

This property is a characteristic of $R$ on a $\F_0$-manifold.
Moreover, \eqref{LK} is similar to the  property for a
K\"ahler-type tensor with respect to $J$ on an almost complex
manifold with Norden metric (\cite{GaGrMi}).

\begin{lem}\label{lem-Kaehler}
If $L$ is a $\f$-K\"ahler-type tensor on $\M$, then the following
properties are valid:
\begin{gather}
L(\f x,\f y,z,w)=L(x,\f y,\f z,w)=-L(x,y,z,w),\label{L1}\\[4pt]
L(\xi,y,z,w)=L(x,\xi,z,w)=L(x,y,\xi,w)=L(x,y,z,\xi)=0,\label{L2}\\[4pt]
L(\f x,y,z,w)=L(x,\f y,z,w)=L(x,y,\f z,w)=L(x,y,z,\f w).\label{L3}
\end{gather}
\end{lem}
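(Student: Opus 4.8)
The plan is to establish the three groups of identities in the order \eqref{L2}, then the first equality of \eqref{L1}, then \eqref{L3}, and to read off the middle equality of \eqref{L1} at the very end. Throughout I would freely use the pair symmetry $L(x,y,z,w)=L(z,w,x,y)$, which is the standard algebraic consequence of \eqref{R1} and \eqref{R2} valid for every curvature-like tensor. First I would dispose of \eqref{L2}: putting $z=\xi$ in \eqref{LK} and using $\f\xi=0$ gives $0=L(x,y,\f\xi,\f w)=-L(x,y,\xi,w)$, and $w=\xi$ gives $L(x,y,z,\xi)=0$; the two remaining vanishings then follow from the pair symmetry and the antisymmetry \eqref{R1}. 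For the first equality of \eqref{L1} I would apply the pair symmetry to \eqref{LK}, reading $L(\f x,\f y,z,w)=L(z,w,\f x,\f y)=-L(z,w,x,y)=-L(x,y,z,w)$ after relabelling.

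Next I would transfer $\f$ within each pair of slots. Replacing $x$ by $\f x$ in the identity $L(\f x,\f y,z,w)=-L(x,y,z,w)$ just obtained, then using $\f^2=-\Id+\eta\otimes\xi$ together with \eqref{L2} to discard the resulting $\xi$-term, yields $-L(x,\f y,z,w)=-L(\f x,y,z,w)$, i.e. $L(\f x,y,z,w)=L(x,\f y,z,w)$; the same substitution $z\mapsto\f z$ in \eqref{LK} gives $L(x,y,\f z,w)=L(x,y,z,\f w)$. These are the transfers inside the slot pairs $\{1,2\}$ and $\{3,4\}$.

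The main obstacle is the remaining transfer of $\f$ across the two pairs, say $L(x,y,\f z,w)=L(\f x,y,z,w)$, because neither the pair symmetry nor the antisymmetries can move $\f$ between the groups $\{1,2\}$ and $\{3,4\}$; here the first Bianchi identity \eqref{R2} is indispensable. I would write \eqref{R2} twice, once with $z$ replaced by $\f z$ and once with $x$ replaced by $\f x$, normalising each term of the two cyclic sums by the within-pair transfers of the previous step and by the pair symmetry. Subtracting, the common term $L(\f z,x,y,w)$ cancels and one is left with $\Delta(x,y,z,w)+\Delta(x,w,y,z)=0$, where $\Delta(a,b,c,d):=L(a,b,\f c,d)-L(\f a,b,c,d)$ measures exactly the failure of the desired cross-transfer. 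Since $\Delta(x,w,y,z)$ is $\Delta$ with the cyclic permutation $(b,c,d)\mapsto(d,b,c)$ applied to its last three slots, this relation says that $\Delta$ changes sign under that permutation; as the permutation has order three, iterating gives $\Delta=-\Delta$, hence $\Delta\equiv0$, which is the cross-transfer. Combining it with the two within-pair transfers yields all four equalities of \eqref{L3}. Finally, applying \eqref{L3} to move the slot-$1$ factor in $L(\f x,\f y,z,w)$ into slot $3$ and invoking the first equality of \eqref{L1} gives $L(x,\f y,\f z,w)=L(\f x,\f y,z,w)=-L(x,y,z,w)$, which is the middle equality of \eqref{L1}.
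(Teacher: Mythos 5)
Your proof is correct, and it uses exactly the toolkit the paper invokes --- \eqref{R1}, \eqref{R2}, \eqref{LK} and the derived pair symmetry $L(x,y,z,w)=L(z,w,x,y)$ --- but it organises the deductions in the opposite order from the paper. The paper asserts that \eqref{L1} and \eqref{L2} follow ``immediately'' and that \eqref{L3} is then a consequence of \eqref{L1} and \eqref{L2}; you instead prove \eqref{L2}, the first equality of \eqref{L1}, and the two within-pair transfers cheaply, then attack \eqref{L3} head-on and finally recover the middle equality of \eqref{L1} from \eqref{L3}. The genuine content in either route is the cross-pair transfer of $\f$ (equivalently, the middle equality $L(x,\f y,\f z,w)=-L(x,y,z,w)$, which couples the slot pairs $\{1,2\}$ and $\{3,4\}$); this is precisely what the paper's ``immediately'' glosses over, and your argument supplies it: writing the Bianchi identity with $z\mapsto\f z$ and with $x\mapsto\f x$, normalising, and subtracting yields $\Delta(x,y,z,w)+\Delta(x,w,y,z)=0$ for the defect $\Delta(a,b,c,d)=L(a,b,\f c,d)-L(\f a,b,c,d)$, and since the permutation acting on the last three slots has order three, iterating forces $\Delta\equiv0$. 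I checked the bracket-by-bracket reduction (the third bracket vanishes by the slot-$\{1,2\}$ transfer, the second becomes $\Delta(x,w,y,z)$ after pair symmetry and the slot-$\{3,4\}$ transfer) and the order-three cancellation; both are sound. The net effect is a complete proof where the paper offers only a sketch; the reversed dependency (\eqref{L3} first, then the remaining part of \eqref{L1}) is a legitimate and arguably cleaner factorisation of the same algebra.
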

\begin{proof}
The equalities \eqref{L1} and \eqref{L2} follow immediately from
\eqref{R1}, \eqref{R2} and \eqref{LK}. The properties \eqref{L1}
and \eqref{L2} imply \eqref{L3}.
\end{proof}

We consider an associated tensor  $L^*$ of $L$ by the equality
\[
L^*(x,y,z,w)=L(x,y,z,\f w).
\]
Let us remark, the tensor $L^*$ is not a curvature-like tensor at
all. If $L$ is a $\f$-K\"ahler-type tensor, then $L^*$ is also a
$\f$-K\"ahler-type tensor. Then the properties in
\lemref{lem-Kaehler} are valid for $L^*$. Obviously, the
associated tensor of $L^*$, i.e. $\left(L^*\right)^*$, is $-L$.
Consequently, we have the following
\begin{cor}
Let $L$ and its associated tensor $L^*$ be $\f$-K\"ahler-type
tensors on $\M$. Then we have
\[
\begin{array}{ll}
\rho(L^*)=\rho^*(L),\qquad & \rho^*(L^*)=-\rho(L),\\[4pt]
\tau(L^*)=\tau^*(L),\qquad & \tau^*(L^*)=-\tau(L).
\end{array}
\]
\end{cor}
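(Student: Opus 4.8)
The plan is to derive all four identities purely from the defining relation $L^*(x,y,z,w)=L(x,y,z,\f w)$ together with the trace definitions of the Ricci tensor and scalar curvature, so that the only geometric facts actually needed are the almost contact relation $\f^2=-\Id+\eta\otimes\xi$ and the vanishing property \eqref{L2} of \lemref{lem-Kaehler}. Since $L$ is a $\f$-K\"ahler-type tensor, so is $L^*$, and hence the associated quantities $\rho(L^*)$, $\rho^*(L^*)$, $\tau(L^*)$, $\tau^*(L^*)$ are all well defined.

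First I would treat the two Ricci identities. For the first, substituting the definition of $L^*$ into the trace gives, for arbitrary $y,z$,
\[
\rho(L^*)(y,z)=g^{ij}L^*(e_i,y,z,e_j)=g^{ij}L(e_i,y,z,\f e_j)=\rho^*(L)(y,z),
\]
which is exactly $\rho(L^*)=\rho^*(L)$ and is immediate from the definitions. The second Ricci identity is where the only genuine manipulation appears: I would compute
\[
\rho^*(L^*)(y,z)=g^{ij}L^*(e_i,y,z,\f e_j)=g^{ij}L(e_i,y,z,\f^2 e_j)
\]
and then replace $\f^2 e_j$ by $-e_j+\eta(e_j)\xi$. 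The correction term yields $g^{ij}\eta(e_j)L(e_i,y,z,\xi)$, which vanishes by \eqref{L2}, leaving $-g^{ij}L(e_i,y,z,e_j)=-\rho(L)(y,z)$, i.e. $\rho^*(L^*)=-\rho(L)$. I expect this handling of the $\f^2$-term --- isolating the $\eta\otimes\xi$ part and discarding it via \eqref{L2} --- to be the main (and essentially only) obstacle in the argument.

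Finally, the two scalar-curvature identities follow by tracing the Ricci identities just obtained, since $\tau$ and $\tau^*$ are themselves the $g$-traces of $\rho$ and $\rho^*$. Applying $g^{ij}(\cdot)(e_i,e_j)$ to $\rho(L^*)=\rho^*(L)$ gives $\tau(L^*)=\tau^*(L)$, and applying it to $\rho^*(L^*)=-\rho(L)$ gives $\tau^*(L^*)=-\tau(L)$, with no further computation required.
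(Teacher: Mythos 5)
Your proposal is correct and follows essentially the same route as the paper: the paper deduces the corollary from the observation that $\left(L^*\right)^*=-L$ (which is exactly your $\f^2$-computation combined with the vanishing property \eqref{L2}) together with the definitional identity $\rho(L^*)=\rho^*(L)$, and then traces to get the scalar curvatures. Your write-up just makes explicit the short computation the paper leaves implicit behind the word ``Consequently''.
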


\subsection{Examples of curvature-like tensors of $\f$-K\"ahler
type}

Let us consider the following basic tensors of type (0,4) derived
by the structural tensors of $\M$ and an arbitrary tensor $S$ of
type (0,2):
\[
\begin{split}
\psi_1(S)(x,y,z,w)&=\bigl\{g(y,z)S(x,w)+g(x,w)S(y,z)\bigr\}_{[x\leftrightarrow
y]},\\[4pt]
\psi_2(S)(x,y,z,w)&=\bigl\{g(y,\f z)S(x, \f w)+g(x, \f w)S(y,\f
z)\bigr\}_{[x\leftrightarrow y]},
\\[4pt]
\psi_3(S)(x,y,z,w)&=-\bigl\{g(y,z)S(x,\f w)+g(y, \f z)S( x,w)
\\[4pt]
&\phantom{=-\bigl\{}+g(x,\f w)S(y,z)+g( x,w)S(y, \f
z)\bigr\}_{[x\leftrightarrow y]},
\\[4pt]
\psi_4(S)(x,y,z,w)&=\bigl\{\eta(y)\eta(z)S(x,w)+\eta(x)\eta(w)S(y,
z)\bigr\}_{[x\leftrightarrow y]},\\[4pt]
\psi_5(S)(x,y,z,w)&=\bigl\{\eta(y)\eta(z)S(x,\f w)
+\eta(x)\eta(w)S(y,\f z)\bigr\}_{[x\leftrightarrow y]},
\end{split}
\]
where we use the denotation $\left\{A(x, y,
z)\right\}_{[x\leftrightarrow y]}$ instead of the difference $A(x,
y, z) - A(y,x,z)$ for any tensor $A(x, y, z)$. The tensor
$\psi_1(S)$ coincides with the known Kulkarni-Nomizu product of
the tensors $g$ and $S$.

The five tensors $\psi_i(S)$ are not curvature-like tensors at
all. In \cite{ManGri2} and \cite{Man-diss}, it is proved that on
an almost contact B-metric manifold:
\begin{enumerate}
    \item $\psi_1(S)$ and $\psi_4(S)$ are curvature-like tensors
    if and only if $S(x,y)=S(y,x)$;
    \item $\psi_2(S)$ and $\psi_5(S)$ are curvature-like tensors
    if and only if $S(x,\f y)\allowbreak{}=S(y,\f x)$;
    \item $\psi_3(S)$ is a curvature-like tensor
    if and only if $S(x,y)=S(y,x)$ and $S(x,\f y)=S(y,\f x)$.
\end{enumerate}
Moreover, both of the tensors $\psi_1(S)-\psi_2(S)-\psi_4(S)$ and
$\psi_3(S)+\psi_5(S)$ are of $\f$-K\"ahler type if and only if the
tensor $S$ is symmetric and hybrid with respect $\f$, i.e.
$S(x,y)=S(y,x)$ and $S(x,y)=-S(\f x,\f y)$. In this case, their
associated tensors are the following:
\[
\begin{array}{l}
\left(\psi_1-\psi_2-\psi_4\right)^*(S)=-\left(\psi_3+\psi_5\right)(S),\\[4pt]
\left(\psi_3+\psi_5\right)^*(S)=\left(\psi_1-\psi_2-\psi_4\right)(S).
\end{array}
\]

The following tensors $\pi_i$ ($i=1, 2, \dots, 5$), derived only
by the metric tensors of $\M$, play an important role in
differential geometry of an almost contact B-metric manifold:
\[
\pi_i=\frac{1}{2}\psi_i(g),\; (i=1,2,3); \qquad \pi_i=\psi_i(g),\;
(i=4,5).
\]
In \cite{ManGri2}, it is proved that $\pi_i$ ($i=1, 2, \dots, 5$)
are curvature-like tensors and the tensors
\[
L_1=\pi_1-\pi_2-\pi_4,\qquad L_2=\pi_3+\pi_5
\]
are $\f$-K\"ahler-type tensors. Their associated
$\f$-K\"ahler-type tensors are as follows
\[
L_1^*=-L_2,\qquad L_2^*=L_1.
\]

\section{$\f$-K\"ahler-type tensors on a 5-dimensional almost contact B-metric manifold}

Let $\al$ be a non-degenerate totally real section in $T_pM$, $p
\in M$, and $\al$ be orthogonal to $\xi$ with respect to $g$, i.e.
$\al\perp\f\al$, $\al\perp\xi$. Let $k(\al; p)(L)$ and $k^*(\al;
p)(L)$ be the scalar curvatures of $\al$ with respect to a
curvature-like tensor $L$, i.e.
\[
k(\al; p)(L)=\frac{L(x,y,y,x)}{\pi_1(x,y,y,x)}, \qquad k^*(\al;
p)(L)=\frac{L(x,y,y,\f x)}{\pi_1(x,y,y,x)},
\]
where $\{x,y\}$ is an arbitrary basis of $\al$.

We recall two known propositions for constant sectional
curvatures.
\begin{thm}[\cite{NakGri-97}]\label{thm:A}
Let $\M$ $(\dim M \geq 5)$ be an almost contact B-metric
$\F_0$-manifold. Then $M$ is of constant totally real sectional
curvatures $\nu=\nu(p)(R) = k(\al; p)(R)$ and $\nu^*=\nu^*(p)(R) =
˜k^*(\al; p)(R)$ if and only if $R = \nu L_1 + ˜\nu^* L_2$. Both
functions $\nu$ and $\nu^*$ are constant if $M$ is connected and
$\dim M \geq 7$.
\end{thm}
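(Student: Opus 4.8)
The plan is to prove both implications by reducing to an algebraic statement about the $\f$-K\"ahler-type tensor $R$ on the contact distribution $H=\ker\eta$, and then to deduce the constancy of $\nu,\nu^*$ from the second Bianchi identity. By \eqref{L2} a $\f$-K\"ahler-type tensor vanishes whenever one argument is $\xi$, so both $R$ and $\nu L_1+\nu^*L_2$ are determined by their restrictions to $H$, on which $\f$ is an almost complex structure and $g$ a B-metric. Since the associated metric is symmetric, $\f$ is $g$-self-adjoint, $g(x,\f y)=g(\f x,y)$; hence a section $\al=\Span\{x,y\}$ that is totally real and orthogonal to $\xi$ is exactly a pair with $\eta(x)=\eta(y)=0$ and $g(x,\f x)=g(y,\f y)=g(x,\f y)=0$.

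For the ``if'' part I would compute the four numbers $k(\al)(L_i)$, $k^*(\al)(L_i)$ directly from $L_1=\pi_1-\pi_2-\pi_4$ and $L_2=\pi_3+\pi_5$. On a totally real $\al\perp\xi$ every term carrying a factor $\eta(x)$, $\eta(y)$ or $g(x,\f x)$, $g(x,\f y)$ drops out, so $\pi_2,\pi_4,\pi_5$ contribute nothing; using $\f^2x=-x$ on $H$ one gets
\[
L_1(x,y,y,x)=\pi_1(x,y,y,x),\quad L_2(x,y,y,\f x)=\pi_1(x,y,y,x),
\]
together with $L_1(x,y,y,\f x)=L_2(x,y,y,x)=0$. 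Thus $k(\al)(L_1)=k^*(\al)(L_2)=1$ and $k(\al)(L_2)=k^*(\al)(L_1)=0$, so $R=\nu L_1+\nu^*L_2$ yields $k(\al)(R)=\nu$, $k^*(\al)(R)=\nu^*$ on every such $\al$.

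For the ``only if'' part set $T=R-\nu L_1-\nu^*L_2$. As $R,L_1,L_2$ are $\f$-K\"ahler-type, so is $T$, and by the previous paragraph $T(x,y,y,x)=T(x,y,y,\f x)=0$ for every totally real $\al\perp\xi$. The goal is $T=0$ on $H$, and this polarization is the heart and the main obstacle of the proof. Fixing $x$ and substituting $y\mapsto y+w$, $y\mapsto y+\f w$, and so on, while using the curvature symmetries \eqref{R1}, \eqref{R2} and the K\"ahler identities \eqref{L1}, \eqref{L3}, one propagates the vanishing on totally real diagonals to $T(x,y,z,w)=0$ for all $x,y,z,w\in H$. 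The delicate points are that the polarization must stay inside the totally real configurations and that the indefinite B-metric forces one to secure non-degeneracy of the auxiliary sections; combined with \eqref{L2} this gives $T=0$, i.e. $R=\nu L_1+\nu^*L_2$.

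Finally, for the constancy statement I would use that on an $\F_0$-manifold $g,\f,\xi,\eta$ are $\n$-parallel, hence so are the algebraic tensors $L_1,L_2$; therefore $(\n_x R)=(\D\nu)(x)\,L_1+(\D\nu^*)(x)\,L_2$. Inserting this into the second Bianchi identity $\sx(\n_x R)(y,z,\cdot,\cdot)=0$ and contracting with $g^{ij}$ (using the traces of $L_1,L_2$) produces a linear system in $\D\nu,\D\nu^*$ whose coefficients do not vanish precisely when $\dim M=2n+1\geq 7$. Then $\D\nu=\D\nu^*=0$, so on a connected $M$ both $\nu$ and $\nu^*$ are constant; for $\dim M=5$ the contraction degenerates, which is why only the pointwise statement survives there.
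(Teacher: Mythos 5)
First, note that the paper does not prove \thmref{thm:A} at all: it is quoted from \cite{NakGri-97} and used as a known result, so the only internal point of comparison is the proof of \thmref{thm:L5}, where the authors handle the 5-dimensional case by fixing an adapted $\f$-basis $\{e_1,e_2,\f e_1,\f e_2,\xi\}$ and reading off all components of $L$ from \eqref{R1}, \eqref{R2}, \eqref{LK}. Your overall architecture is the right one for general dimension, and two of your three steps are sound: the computation $k(\al)(L_1)=k^*(\al)(L_2)=1$, $k(\al)(L_2)=k^*(\al)(L_1)=0$ on a totally real $\al\perp\xi$ is correct (the terms from $\pi_2,\pi_4,\pi_5$ do drop out, and $\pi_3(x,y,y,\f x)=\pi_1(x,y,y,x)$ follows from $\f^2=-\id$ on $\ker\eta$ together with the self-adjointness of $\f$), and the Schur-type argument from the second Bianchi identity with $\n L_1=\n L_2=0$ is the standard and correct route to constancy for $\dim M\geq 7$, degenerating exactly at $\dim M=5$ as \thmref{thm:B} requires.

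The genuine gap is in the ``only if'' direction. You reduce it to the claim that a $\f$-K\"ahler-type tensor $T$ with $T(x,y,y,x)=T(x,y,y,\f x)=0$ on every non-degenerate totally real $\al\perp\xi$ must vanish, correctly call this ``the heart and the main obstacle,'' and then do not prove it. That lemma is the entire content of the implication, and it is not routine here: the substitutions $y\mapsto y+w$, $y\mapsto y+\f w$ leave the totally real cone (the constraints $g(x,\f x)=g(x,\f y)=g(y,\f y)=0$ are quadratic in the pair, so the hypothesis cannot be applied to the polarized arguments without first re-establishing them), and with a metric of signature $(n,n+1)$ the non-degeneracy of the auxiliary planes must be argued rather than assumed, since degenerate totally real planes exist and $k$, $k^*$ are undefined on them. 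A complete argument must either carry out this polarization with explicit control of both constraints (as in the Norden-metric literature the statement descends from), or bypass it entirely by enumerating the components of $T$ in an adapted $\f$-basis, which is exactly what the paper does for its own \thmref{thm:L5} in dimension 5. As written, your proposal is a correct plan whose central step is asserted rather than proved.
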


\begin{thm}[\cite{NakMan14}]\label{thm:B}
Each 5-dimensional almost contact B-metric $\F_0$-manifold
 has point-wise constant sectional curvatures
\[
 \nu(p)(R) =
k(\al; p)(R),\qquad \nu^*(p)(R) = ˜k^*(\al; p)(R).
\]
\end{thm}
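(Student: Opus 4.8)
The plan is to exploit that, since $M$ is an $\F_0$-manifold, its curvature tensor $R$ is a $\f$-K\"ahler-type tensor, so the restrictions of \lemref{lem-Kaehler} apply. By \eqref{L2}, $R$ vanishes as soon as one of its arguments is $\xi$, hence all the information of $R$ at a point $p$ is carried by its restriction to the contact distribution $H=\ker\eta_p$, which is $4$-dimensional. On $H$ the endomorphism $\f$ restricts to an almost complex structure ($\f^2=-\Id$) and $g$ to a Norden B-metric, so I would complexify, writing $H^{\mathbb{C}}=H^{1,0}\oplus H^{0,1}$ with $H^{1,0}$, $H^{0,1}$ the $\pm i$-eigenspaces of $\f$, each of complex dimension $2$, and extend $R$ complex-multilinearly.

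The decisive step is to show that in this dimension the space of $\f$-K\"ahler-type tensors is only two-dimensional. Using the sliding property \eqref{L3}, if two of the four arguments of $R$ lie in eigenspaces for opposite eigenvalues $\pm i$, the corresponding component carries a factor $i$ on one side and $-i$ on the other, hence vanishes. Consequently the only surviving components are the purely holomorphic $R|_{H^{1,0}}$ and their conjugates $R|_{H^{0,1}}$. But $R|_{H^{1,0}}$ inherits from \eqref{R1} and \eqref{R2} exactly the symmetries (including pair symmetry) of an ordinary complex-valued algebraic curvature tensor on the complex $2$-dimensional space $H^{1,0}$, and the space of such tensors has complex dimension $\tfrac{d^2(d^2-1)}{12}\big|_{d=2}=1$. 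Thus $R|_{H^{1,0}}$ is a single complex multiple of a fixed generator. Matching this one complex parameter with the two real basic tensors $L_1=\pi_1-\pi_2-\pi_4$ and $L_2=\pi_3+\pi_5$ of Section~2 --- whose holomorphic restrictions are related by the $*$-operation, which on $H^{1,0}$ is just multiplication by $i$ and hence makes $L_1|_{H^{1,0}}$, $L_2|_{H^{1,0}}$ span the complex line over $\R$ --- yields unique functions $\nu=\nu(p)$ and $\nu^*=\nu^*(p)$ with $R=\nu L_1+\nu^* L_2$ at every point.

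It then remains to read off the sectional curvatures. On a non-degenerate totally real section $\al\perp\xi$ with basis $\{x,y\}$ one has $\eta(x)=\eta(y)=0$ and $g(\al,\f\al)=0$, so every term of $\pi_2,\pi_4,\pi_5$ and every mixed term of $\pi_3$ vanishes there, while $\f^2x=-x$. A short computation then gives $k(\al;p)(L_1)=1$, $k^*(\al;p)(L_1)=0$, $k(\al;p)(L_2)=0$, $k^*(\al;p)(L_2)=1$. Substituting $R=\nu L_1+\nu^* L_2$ yields $k(\al;p)(R)=\nu(p)$ and $k^*(\al;p)(R)=\nu^*(p)$, both independent of the chosen section $\al$; this is exactly the asserted point-wise constancy, and it is consistent with \thmref{thm:A}, which characterizes the same form $R=\nu L_1+\nu^* L_2$ by constant totally real sectional curvatures.

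The main obstacle is the middle step: proving that the space of $\f$-K\"ahler-type tensors collapses to dimension two precisely in dimension $5$ (complex dimension $2$). This is where the hypothesis $\dim M=5$ is essential --- in dimension $\geq 7$ the analogous space is strictly larger, so $R=\nu L_1+\nu^* L_2$ becomes a genuine restriction rather than an identity, which is why \thmref{thm:A} needs the constant-curvature hypothesis there. One must also check that $L_1$ does not vanish on $H^{1,0}$, so that $L_1|_{H^{1,0}}$ and $L_2|_{H^{1,0}}$ indeed span the one-dimensional complex space and the decomposition exists.
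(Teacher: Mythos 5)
Your argument is correct, and it reaches the stated result by a route that differs in technique from the paper's. The paper itself quotes \thmref{thm:B} from the literature without proof, but its own \thmref{thm:L5} subsumes it: there the authors fix an adapted $\f$-basis $\{e_1,e_2,\f e_1,\f e_2,\xi\}$ and deduce $L=\nu L_1+\nu^*L_2$ directly from \eqref{R1}, \eqref{R2} and \eqref{LK} by enumerating components --- essentially observing that \eqref{L2} kills everything involving $\xi$, that \eqref{L3} forces $L(e_i,\f e_i,\cdot,\cdot)=0$ and identifies $[\f e_1\wedge e_2]$ with $[e_1\wedge\f e_2]$ and $[\f e_1\wedge\f e_2]$ with $-[e_1\wedge e_2]$, so that only the two numbers $\nu=L(e_1,e_2,e_2,e_1)$ and $\nu^*=L(e_1,e_2,e_2,\f e_1)$ survive. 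You obtain the same two-dimensionality by complexifying $\ker\eta$, using \eqref{L3} to kill the mixed $H^{1,0}$--$H^{0,1}$ components, and invoking the dimension formula $d^2(d^2-1)/12=1$ for complex algebraic curvature tensors at $d=2$; this is a cleaner, more structural version of the same count, and it makes transparent why the statement is special to $\dim M=5$ and why the pair $(L_1,L_2)$ spans (your identification $L_2|_{H^{1,0}}=-iL_1|_{H^{1,0}}$ via the $*$-operation is the right observation, and $L_1|_{H^{1,0}}=2\pi_1|_{H^{1,0}}\neq0$ since $g(x-i\f x,y-i\f y)=2g(x,y)-2ig(x,\f y)$ is not identically zero). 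Your evaluation of $k$ and $k^*$ on a totally real section orthogonal to $\xi$ matches the paper's. Two small remarks: pair symmetry of $L$, which you use to view $L|_{H^{1,0}}$ as an algebraic curvature tensor, should be noted as a consequence of \eqref{R1} and \eqref{R2} (a standard fact, but worth a line); and what you describe in your last paragraph as the ``main obstacle'' is in fact already discharged by your eigenspace argument, so it is not an outstanding gap.
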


In this relation, we give the following
\begin{thm}\label{thm:L5}
Let $\M$ be a 5-dimensional almost contact B-metric manifold. Then
each $\f$-K\"ahler-type tensor has the form $$L=\nu L_1+\nu^*
L_2,$$ where $\nu=\nu(L)$ and $\nu^*=\nu^*(L)=\nu(L^*)$ are the
sectional curvatures of the totally real 2-planes orthogonal to
$\xi$ in $T_pM$, $p\in M$, with respect to $L$. Moreover, $\M$ is
of point-wise contact sectional curvatures of the totally real
2-planes orthogonal to $\xi$ with respect to $L$.
\end{thm}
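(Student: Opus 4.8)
The plan is to reduce the statement to pointwise linear algebra on the contact distribution and then to carry out a dimension count. By \lemref{lem-Kaehler}, any $\f$-K\"ahler-type tensor $L$ vanishes as soon as one of its arguments is $\xi$, by \eqref{L2}, so $L$ is completely determined by its restriction to the horizontal distribution $H=\ker\eta$, which is $4$-dimensional when $\dim M=5$. On $H$ the endomorphism $\f$ satisfies $\f^2=-\Id$ and the B-metric condition gives $g(\f x,\f y)=-g(x,y)$, so $(H,\f|_H,g|_H)$ is a $4$-dimensional Norden space and \eqref{LK} is exactly the K\"ahler-Norden identity. I would fix a $\f$-adapted basis $\{e_1,e_2,e_3=\f e_1,e_4=\f e_2,\xi\}$ with $g(e_1,e_1)=g(e_2,e_2)=1$, $g(e_3,e_3)=g(e_4,e_4)=-1$ and the remaining products on $H$ vanishing, so that $\al=\Span\{e_1,e_2\}$ is a totally real $2$-plane orthogonal to $\xi$.

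The core step is to show that the space of $\f$-K\"ahler-type tensors on $H$ is exactly $2$-dimensional. First I would read off from \eqref{L1} that the ``holomorphic'' pairs are annihilated: since $e_1\wedge\f e_1$ and $e_2\wedge\f e_2$ are fixed by $x\mapsto\f x$ in both slots, \eqref{L1} forces $L(e_1,e_3,\cdot,\cdot)=L(e_2,e_4,\cdot,\cdot)=0$, while the same identity yields $L(e_3,e_4,\cdot,\cdot)=-L(e_1,e_2,\cdot,\cdot)$ and $L(e_2,e_3,\cdot,\cdot)=-L(e_1,e_4,\cdot,\cdot)$. Hence, after using the pair symmetry $L(x,y,z,w)=L(z,w,x,y)$, every component is a combination of the three numbers $a=L(e_1,e_2,e_1,e_2)$, $b=L(e_1,e_2,e_1,e_4)$ and $c=L(e_1,e_4,e_1,e_4)$. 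The first Bianchi identity \eqref{R2} applied to $(e_1,e_2,e_3,e_4)$ then collapses, via \eqref{LK} and \eqref{L1}, to $a+c=0$, i.e. $c=-a$; checking the remaining index choices of \eqref{R2} (using \eqref{L3} to transport $\f$ between slots) shows they are all identically satisfied and impose no relation on $b$. Thus $L$ is governed by the two free parameters $a$ and $b$.

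Next I would exhibit two independent members of this $2$-dimensional space, namely $L_1=\pi_1-\pi_2-\pi_4$ and $L_2=\pi_3+\pi_5$, which are $\f$-K\"ahler-type by the results of Sec.~2. A short computation in the adapted basis (on $H$ the terms $\pi_4,\pi_5$ drop out) gives $(a,b)=(-1,0)$ for $L_1$ and $(a,b)=(0,-1)$ for $L_2$; these are linearly independent, so $\{L_1,L_2\}$ is a basis and $L=\nu L_1+\nu^*L_2$ with $\nu=-a$, $\nu^*=-b$. Evaluating the sectional curvatures on $\al$ then gives $k(\al)(L)=L(e_1,e_2,e_2,e_1)/\pi_1(e_1,e_2,e_2,e_1)=-a=\nu$ and, using \eqref{L3}, $k^*(\al)(L)=L(e_1,e_2,e_2,\f e_1)/\pi_1(e_1,e_2,e_2,e_1)=-b=\nu^*$; the identity $\nu(L^*)=\nu^*(L)$ is immediate from $L^*(x,y,z,w)=L(x,y,z,\f w)$, since $k(\al)(L^*)=k^*(\al)(L)$.

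For the point-wise constancy I would observe that the decomposition $L=\nu L_1+\nu^*L_2$ is basis-free, so it suffices to compute the totally real sectional curvatures of $L_1$ and $L_2$ on an arbitrary totally real $2$-plane $\bt=\Span\{u,v\}$ orthogonal to $\xi$. Because such a plane satisfies $g(u,\f u)=g(u,\f v)=g(v,\f u)=g(v,\f v)=0$, the mixed terms in $\pi_2,\pi_3$ vanish and one finds the constant values $k(L_1)=k^*(L_2)=1$ and $k^*(L_1)=k(L_2)=0$, independently of $\bt$. By linearity of $k$ and $k^*$ in $L$, this gives $k(\bt)(L)=\nu$ and $k^*(\bt)(L)=\nu^*$ for every such $\bt$, which is the asserted point-wise constancy. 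The main obstacle is the dimension count of the second paragraph: one must verify carefully that \eqref{L1}, \eqref{L3} and the Bianchi identity \eqref{R2} together leave precisely two free components --- in particular that no further Bianchi relation constrains $b$ --- since this exact collapse, special to dimension five, is what forces every $\f$-K\"ahler-type tensor here to have constant totally real sectional curvatures, in parallel with \thmref{thm:B}.
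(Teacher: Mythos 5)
Your proposal is correct and follows essentially the same route as the paper: the paper's proof also fixes an adapted $\f$-basis of $T_pM$ and deduces $L=\nu L_1+\nu^*L_2$ from \eqref{R1}, \eqref{R2} and \eqref{LK} by a component count (stated there only as ``we obtain immediately''), then evaluates $k$ and $k^*$ on an arbitrary totally real 2-plane orthogonal to $\xi$ to get the point-wise constancy. Your write-up merely makes the implicit dimension count explicit (reduction to the two free components $a,b$ via \eqref{L1}--\eqref{L3}, pair symmetry and the single Bianchi relation $c=-a$), and the details check out.
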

\begin{proof}
Let $\{e_1, e_2, \f e_1, \f e_2,\xi\}$ be an adapted $\f$-basis of
$T_pM$ with respect to $g$, i.e.
\[
\begin{array}{c}
-g(e_1,e_1)=-g(e_2,e_2)=g(\f e_1,\f e_1)=g(\f e_2,\f
e_2)=1,\\[4pt]
g(e_i,\f e_j)=0, \quad \eta(e_i)=0\quad (i,j\in\{1,2\}).
\end{array}
\]
Then an arbitrary vector in $T_pM$ has the form $x = x^1e_1 +
x^2e_2 + \tilde{x}^1\f e_1 +\tilde{x}^2\f e_2 + \eta(x)\xi$. Using
 properties \eqref{R1}, \eqref{R2} and \eqref{LK} for $L(x, y,
z,w)$, we obtain immediately $L = \nu L_1 + ˜\nu^* L_2$, where
$\nu = L(e_1, e_2, e_2, e_1)$, $\nu^* = L(e_1, e_2, e_2, \f
e_1)=\nu(L^*) = L^*(e_1, e_2, e_2, e_1)$ are the sectional
curvatures of $\al$ with respect to $L$, because $\pi_1(e_1, e_2,
e_2, e_1)=1$.

Then, if $\{x,y\}$ is an adapted $\f$-basis of an arbitrary
totally real 2-plane $\al$ orthogonal to $\xi$, i.e.
\[
g(x,y)=g(x,\f x)=g(x,\f y)=g(y,\f y)=\eta(x)=\eta(y)=0,
\]
we get $k(\al;p)(L)=\nu(p)(L)$, $k^*(\al;p)(L)=\nu^*(p)(L)$,
taking into account the expression $L = \nu L_1 + ˜\nu^* L_2$.
Therefore, $\M$ is of point-wise contact sectional curvatures of
$\al$ with respect to $L$.
\end{proof}

The restriction of \thmref{thm:L5} in $\F_0$ confirms
\thmref{thm:A} because $R$ is a $\f$-K\"ahler-type tensor on a
$\F_0$-manifold.

\subsection{Curvature tensor of a natural connection on a 5-dimension\-al almost contact B-metric manifold}

In \cite{Man31}, it is introduced the notion of a \emph{natural
connection} on the manifold $(M,\f,\xi,\eta,g)$ as a linear
connection $D$, with respect to which the almost contact structure
$(\f,\xi,\eta)$ and the B-metric $g$ are parallel, \ie
$D\f=D\xi=D\eta=Dg=0$. According to \cite{ManIv36}, a necessary
and sufficient condition a linear connection $D$ to be natural on
$\M$ is $D\f=Dg=0$.

Let $K$ be curvature tensor of a natural connection $D$ with
torsion $T$. Then $K$ satisfies \eqref{R1} and \eqref{LK}. Instead
of \eqref{R2}, we have the following form of the first Bianchi
identity (\cite{KoNo})
\[
\sx K(x,y,z,w)=\sx
\left\{T(T(x,y),z,w)+\left(D_xT\right)(y,z,w)\right\}.
\]

If we set the condition $\sx K(x,y,z,w)=0$ as for the curvature
tensor $R$ then $K$ is a $\f$-K\"ahler-type tensor and satisfies
the condition of \thmref{thm:L5}. Therefore, we obtain

\begin{cor}\label{cor:K5}
Let $\M$ be a 5-dimensional almost contact B-metric manifold with
a natural connection $D$ with curvature tensor $K$ of
$\f$-K\"ahler-type. Then $K$ has the form
$$K=\nu L_1+\nu^* L_2,$$ where $\nu=\nu(K)$ and $\nu^*=\nu^*(K)=\nu(K^*)$
are the sectional curvatures of the totally real 2-planes
orthogonal to $\xi$ in $T_pM$, $p\in M$, with respect to $K$.
Moreover, $\M$ is of point-wise contact sectional curvatures of
the totally real 2-planes orthogonal to $\xi$ with respect to $K$.
\end{cor}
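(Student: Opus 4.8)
The plan is to treat the corollary as a direct specialization of \thmref{thm:L5}: the assertion to be proved is exactly the conclusion of that theorem applied to the particular tensor $L=K$, so the only work is to check that $K$ meets the three defining properties \eqref{R1}, \eqref{R2}, \eqref{LK} of a $\f$-K\"ahler-type tensor. Once this is done, no further computation is required and the decomposition $K=\nu L_1+\nu^* L_2$ together with the interpretation of $\nu$, $\nu^*$ follows verbatim.

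First I would record which properties come for free from the naturalness of $D$. Since $D$ is a natural connection we have $Dg=0$, and the standard identity for the curvature of a metric connection yields the skew-symmetry $K(x,y,z,w)=-K(x,y,w,z)$; combined with the automatic skew-symmetry $K(x,y,z,w)=-K(y,x,z,w)$ in the first pair of arguments this gives \eqref{R1}. Likewise $D\f=0$ forces $K(x,y,\f z,\f w)=-K(x,y,z,w)$, which is precisely \eqref{LK}. Both facts are already asserted in the paragraph preceding the corollary. The one property that is \emph{not} automatic is the first Bianchi identity \eqref{R2}: for a connection with torsion $T$ the cyclic sum $\sx K(x,y,z,w)$ equals $\sx\bigl\{T(T(x,y),z,w)+\left(D_xT\right)(y,z,w)\bigr\}$ rather than zero. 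Here the hypothesis supplies the missing piece: by the definition of a $\f$-K\"ahler-type tensor, $K$ is in particular a curvature-like tensor, hence satisfies $\sx K(x,y,z,w)=0$, i.e. \eqref{R2}. In effect the $\f$-K\"ahler-type assumption on $K$ is equivalent to imposing the torsion relation $\sx\bigl\{T(T(x,y),z,w)+\left(D_xT\right)(y,z,w)\bigr\}=0$ on top of what $Dg=0$ and $D\f=0$ already provide.

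With all of \eqref{R1}, \eqref{R2}, \eqref{LK} verified, $K$ is a genuine $\f$-K\"ahler-type tensor on a 5-dimensional manifold, so \thmref{thm:L5} applies with $L=K$ and delivers at once $K=\nu L_1+\nu^* L_2$, the identifications $\nu=\nu(K)$ and $\nu^*=\nu^*(K)=\nu(K^*)$ as the totally real sectional curvatures orthogonal to $\xi$, and the point-wise constancy of those curvatures. There is no substantive obstacle beyond this bookkeeping; the only point to watch is that \thmref{thm:L5} rests on \thmref{thm:B}, which is a genuinely 5-dimensional statement, so I would claim only the point-wise contact sectional curvature conclusion and not the global constancy available in dimension $\geq 7$ via \thmref{thm:A}.
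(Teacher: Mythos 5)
Your proposal is correct and follows essentially the same route as the paper: the authors likewise observe that naturalness of $D$ (i.e. $Dg=D\f=0$) gives \eqref{R1} and \eqref{LK} automatically, that the first Bianchi identity \eqref{R2} is the only non-automatic property and is supplied by the $\f$-K\"ahler-type hypothesis on $K$, and then they apply \thmref{thm:L5} with $L=K$. Your closing caveat about claiming only the point-wise conclusion is also consistent with the paper's statement.
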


\section{Curvature tensor of the $\f$-canonical connection}

According to \cite{ManIv38}, a natural connection $D$ is called a
\emph{$\f$-cano\-nic\-al connection} on the manifold
$(M,\f,\xi,\allowbreak\eta,g)$ if the torsion tensor $T$ of $D$
satisfies the following identity:
\begin{equation*}\label{T-can}
\begin{split}
    &\bigl\{T(x,y,z)-T(x,\f y,\f z)
    -\eta(x)\left\{T(\xi,y,z)
    -T(\xi, \f y,\f z)\right\}\\[4pt]
    &-\eta(y)\left\{T(x,\xi,z)-T(x,z,\xi)-\eta(x)T(z,\xi,\xi)\right\}\bigr\}_{[y\leftrightarrow
    z]}=0.
\end{split}
\end{equation*}

Let us remark that the restriction the $\f$-canonical connection
$D$ of the manifold $\M$ on the contact distribution $\ker(\eta)$
is the unique canonical connection of the corresponding almost
complex manifold with Norden metric, studied in \cite{GaMi87}.

In \cite{ManGri2}, it is introduced a natural connection on $\M$,
defined by
\begin{equation}\label{fB}
    D_xy=\n_xy+\frac{1}{2}\bigl\{\left(\n_x\f\right)\f
y+\left(\n_x\eta\right)y\cdot\xi\bigr\}-\eta(y)\n_x\xi.
\end{equation}
In \cite{ManIv37}, the connection determined by \eqref{fB} is
called a \emph{$\f$B-connection}. It is studied for some classes
of $\M$ in \cite{ManGri2}, \cite{Man3}, \cite{Man4} and
\cite{ManIv37}. The $\f$B-connection is the odd-dimensional
analogue of the B-connection on the corresponding almost complex
manifold with Norden metric, studied for the class $\W_1$ in
\cite{GaGrMi}.

In \cite{ManIv38}, it is proved that the $\f$-canonical connection
and the $\f$B-connection coincide on the almost contact B-metric
manifolds in a class containing
$\F_1\oplus\F_4\oplus\F_5\oplus\F_{11}$.

According to \cite{ManGri2}, the necessary and sufficient
conditions $K$ to be a $\f$-K\"ahler-type tensor in $\F_i$
$(i=1,4,5,11)$ is the associated 1-forms $\ta$, $\ta^*$ and
$\om\circ\f$ to be closed. These subclasses we denote by $\F_i^0$
$(i=1,4,5,11)$.

Bearing in mind the second Bianchi identity
\[
\sx
\left\{\left(D_xK\right)(y,z)+K\left(T(x,y),z\right)\right\}=0,
\]
we compute the scalar curvatures for $K$ determined by
\[
\tau(K)=g^{ij}\rho(K)_{ij},\qquad
\tau^*(K)=\tau(K^*)={g}^{ij}\f_j^k\rho(K)_{ik},
\]
where $\rho(K)_{ij}$ is the Ricci tensor of $K$, and then we get
the following
\begin{lem}\label{lem:dtau}
For $\M$ in $\F_i^0$ $(i=1,4,5,11)$, the relations for the scalar
curvatures $\tau=\tau(K)$ and $\tau^*=\tau^*(K)$ of $K$ are:
\begin{equation}\label{dtau*}
\D\tau\circ\f=-\D\tau^*-\frac{1}{n}\left(\tau\ta+\tau^*\ta^*\right),
\quad
\D\tau^*\circ\f=\D\tau-\frac{1}{n}\left(\tau^*\ta-\tau\ta^*\right).
\end{equation}
\end{lem}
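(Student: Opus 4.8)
The plan is to compute the divergence-type identities for the scalar curvatures $\tau(K)$ and $\tau^*(K)$ by contracting the second Bianchi identity for the natural connection $D$, which is stated just above the lemma as $\sx\left\{\left(D_xK\right)(y,z)+K\left(T(x,y),z\right)\right\}=0$. The key structural input is that in the subclasses $\F_i^0$ $(i=1,4,5,11)$ the curvature tensor $K$ is of $\f$-K\"ahler type, so by \thmref{thm:L5} (or rather its corollary \corref{cor:K5}) it decomposes as $K=\nu L_1+\nu^* L_2$. Consequently, all the properties \eqref{L1}, \eqref{L2}, \eqref{L3} of \lemref{lem-Kaehler} hold for $K$, and the Ricci tensor and scalar curvatures inherit strong symmetry and $\f$-compatibility relations from this decomposition. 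I would first record the explicit form of $\rho(K)$ and the relations $\tau^*(K)=\tau(K^*)$ stemming from the corollary following \lemref{lem-Kaehler}, so that both scalar curvatures are tied to the single pair $(\nu,\nu^*)$.

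First I would contract the second Bianchi identity twice with $g^{ij}$ to obtain a first-order differential relation involving $\D\tau$, $\D\tau^*$, and the torsion terms $K(T(\cdot,\cdot),\cdot)$. The torsion $T$ of the $\f$-canonical connection, which coincides with the $\f$B-connection \eqref{fB} on the class containing $\F_1\oplus\F_4\oplus\F_5\oplus\F_{11}$, is expressed through $\n\f$ and $\n\xi$; in the main classes $\F_i$ these are in turn given by the associated 1-forms $\ta$, $\ta^*$, $\om$ via the explicit formulas for $F$ in Sec.~1. The strategy is to substitute these class-specific expressions for $T$ into the contracted Bianchi identity, so that the torsion contractions turn into terms of the form $\tau\,\ta$, $\tau^*\,\ta^*$, $\tau^*\,\ta$, and $\tau\,\ta^*$, matching the right-hand sides of \eqref{dtau*}. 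The factor $\frac{1}{n}$ should emerge naturally from the $2n$ in the denominators of the defining formulas for $F$ combined with the trace over a $2n$-dimensional contact distribution.

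The two equations in \eqref{dtau*} are not independent statements but rather the ``$g$-part'' and the ``$\tg$-part'' (equivalently, the $\f$-twisted contraction) of a single complex relation: applying $\f$ to the first argument, i.e. replacing a trace $g^{ij}$ by the associated $g^{ij}\f_j^k$, converts $\D\tau$ into $\D\tau^*$ and permutes the roles of $\ta$ and $\ta^*$. I would therefore derive the first identity in detail and then obtain the second by the substitution $z\mapsto\f z$ (or by passing to the associated tensor $K^*$ and using $\tau(K^*)=\tau^*(K)$, $\tau^*(K^*)=-\tau(K)$ from the corollary), checking that the sign pattern $-\frac{1}{n}(\tau^*\ta-\tau\ta^*)$ arises correctly from $\f^2=-\Id+\eta\otimes\xi$ and the vanishing of all $\xi$-contractions \eqref{L2}.

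The main obstacle will be the bookkeeping of the torsion contractions: one must carefully express $K(T(e_i,e_j),e_k)$-type sums in terms of $\rho(K)$ and the 1-forms, keeping track of which contractions survive the $\f$-K\"ahler symmetries \eqref{L1}--\eqref{L3} and which vanish because of \eqref{L2}. The closedness hypotheses $\D\ta=\D\ta^*=\D(\om\circ\f)=0$ defining $\F_i^0$ are precisely what eliminate the exterior-derivative obstructions that would otherwise appear, so I would invoke them at the stage where terms like $(\D\ta)$ would show up, ensuring those drop out and leaving only the algebraic products of scalar curvatures and 1-forms. Verifying that the coefficients collapse uniformly across all four classes $i=1,4,5,11$—rather than producing class-dependent constants—is the delicate consistency check that confirms \eqref{dtau*}.
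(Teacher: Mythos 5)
Your overall route is the same one the paper takes --- and the paper itself offers no more than the sentence preceding the lemma: contract the second Bianchi identity for the $\f$-canonical connection, substitute the class-specific expression of the torsion coming from the defining formulas of $F$ in $\F_1$, $\F_4$, $\F_5$, $\F_{11}$, and use the closedness of $\ta$, $\ta^*$, $\om\circ\f$ (equivalently, the $\f$-K\"ahler property of $K$ on $\F_i^0$) to eliminate the exterior-derivative terms. Your account of where the factor $\frac{1}{n}$ and the products $\tau\ta$, $\tau^*\ta^*$, $\tau^*\ta$, $\tau\ta^*$ arise, and of obtaining the second identity in \eqref{dtau*} from the first via the associated tensor $K^*$, is consistent with that.

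One correction is needed: you describe the decomposition $K=\nu L_1+\nu^* L_2$ from \thmref{thm:L5} and \corref{cor:K5} as ``the key structural input''. Those statements are established only for $\dim M=5$, whereas \lemref{lem:dtau} is asserted for arbitrary $(2n+1)$-dimensional manifolds in $\F_i^0$ --- the explicit $\frac{1}{n}$ in \eqref{dtau*} only makes sense in that generality. If your computation genuinely leans on that decomposition, you will have proved only the case $n=2$. What is actually required, and all the paper uses, is that $K$ is of $\f$-K\"ahler type on $\F_i^0$ (the fact quoted from \cite{ManGri2} just before the Bianchi identity), so that the properties of \lemref{lem-Kaehler} and the relations $\tau(K^*)=\tau^*(K)$, $\tau^*(K^*)=-\tau(K)$ are available in every dimension. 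With that substitution for your ``key input'', the rest of your plan is the intended argument.
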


Obviously, bearing in mind \eqref{dtau*}, we obtain that the pair
$(\tau,\tau^*)$ on $\M$ is a $\f$-holomorphic pair of functions,
i.e. $\D\tau=\D\tau^*\circ\f$ and $\D\tau^*=-\D\tau\circ\f$, if
and only if the associated 1-forms $\ta$ and $\ta^*$ are zero.
Such one is the case for the class $\F_{11}$.

The system \eqref{dtau*} can be solved with respect to $\ta$ and
$\ta^*$ and then
\begin{equation}\label{h1h2}
\ta=-n\left\{\D f_1
            +\D f_2 \circ\f\right\}, \qquad
\ta^*=n\left\{\D f_1\circ\f-\D f_2\right\},
\end{equation}
where $f_1=\arctan\left(\tau^*/\tau\right)$,
$f_2=\ln\sqrt{\tau^2+\tau^{*2}}$.

Let us consider the complex-valued function $h=\tau+i\tau^*$ or in
polar form $h=|h|e^{i\al}$. Then we have
$|h|=\sqrt{\tau^2+\tau^{*2}}$,
$\al=\arctan\left(\tau^*/\tau\right)$.

Bearing in mind that $\Log h=\ln|h|+i\al$, then \eqref{h1h2} take
the following form:
\begin{equation}\label{ta-ln-al}
\ta=-n\left\{\D\al
            +\D(\ln|h|) \circ\f\right\}, \quad
\ta^*=n\left\{\D\al\circ\f-\D(\ln|h|)\right\}.
\end{equation}

So, we obtain the following

\begin{thm}\label{thm:tititau}
For $\M$ in $\F_i^0$ $(i=1,4,5)$, the associated 1-forms $\ta$ and
$\ta^*$ are derived by the non-$\f$-holomorphic pair of the scalar
curvatures $(\tau,\tau^*)$ of the $\f$-K\"ahler-type tensor $K$
for the $\f$-canonical connection $D$ by \eqref{ta-ln-al}.
\end{thm}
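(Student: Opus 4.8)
The plan is to treat the two identities \eqref{dtau*} of \lemref{lem:dtau} as a single linear algebraic system in the unknown $1$-forms $\ta$ and $\ta^*$, and to invert it. First I would rewrite \eqref{dtau*} so that the terms carrying $\ta,\ta^*$ are isolated on one side, obtaining
\[
\tau\,\ta+\tau^*\,\ta^*=-n\left(\D\tau^*+\D\tau\circ\f\right),\qquad
\tau^*\,\ta-\tau\,\ta^*=n\left(\D\tau-\D\tau^*\circ\f\right).
\]
This is a $2\times2$ system for $(\ta,\ta^*)$ whose coefficient matrix has determinant $-(\tau^2+\tau^{*2})$, so everything hinges on whether this determinant vanishes.

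The decisive point — and the only genuine obstacle — is to guarantee that $\tau^2+\tau^{*2}\neq0$, so that the system is invertible. Here I would invoke the dichotomy recorded just after \lemref{lem:dtau}: the scalar-curvature pair $(\tau,\tau^*)$ is a $\f$-holomorphic pair precisely when $\ta=\ta^*=0$, which is the situation of the class $\F_{11}$. Hence for $\M$ in $\F_i^0$ with $i=1,4,5$ we are by hypothesis in the \emph{non}-$\f$-holomorphic regime, so $(\tau,\tau^*)\neq(0,0)$ and the determinant $-(\tau^2+\tau^{*2})$ is nowhere zero; this is exactly why the theorem excludes $i=11$, where the formula \eqref{ta-ln-al} would be both vacuous and ill-defined.

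With invertibility secured, I would solve the system by Cramer's rule. Expanding the two numerators and dividing through by $\tau^2+\tau^{*2}$, the resulting combinations are recognized as the differentials
\[
\D f_1=\frac{\tau\,\D\tau^*-\tau^*\,\D\tau}{\tau^2+\tau^{*2}},\qquad
\D f_2=\frac{\tau\,\D\tau+\tau^*\,\D\tau^*}{\tau^2+\tau^{*2}}
\]
of the functions $f_1=\arctan\left(\tau^*/\tau\right)$ and $f_2=\ln\sqrt{\tau^2+\tau^{*2}}$, which yields \eqref{h1h2} after a short bookkeeping of signs. Finally, identifying $f_1$ with the argument $\al$ and $f_2$ with $\ln|h|$ of the complex-valued scalar curvature $h=\tau+i\tau^*=|h|e^{i\al}$, so that $\Log h=\ln|h|+i\al$, I would substitute into \eqref{h1h2} to reach the claimed form \eqref{ta-ln-al}. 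Apart from the non-degeneracy argument above, the only verification of any substance is the sign and coefficient bookkeeping in passing from the two Cramer determinants to $\D f_1$ and $\D f_2$; everything else is immediate.
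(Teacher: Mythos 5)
Your proposal is correct and follows essentially the same route as the paper: the authors likewise obtain \eqref{ta-ln-al} by solving the linear system \eqref{dtau*} of \lemref{lem:dtau} for $\ta$ and $\ta^*$, recognizing the resulting combinations as $\D f_1$ and $\D f_2$ with $f_1=\arctan\left(\tau^*/\tau\right)$, $f_2=\ln\sqrt{\tau^2+\tau^{*2}}$, and rewriting via $\Log h=\ln|h|+i\al$. Your explicit discussion of the invertibility condition $\tau^2+\tau^{*2}\neq0$ is a point the paper leaves implicit (it is absorbed into the ``non-$\f$-holomorphic pair'' hypothesis of the statement), so if anything you are slightly more careful than the original.
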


\begin{cor}\label{cor:tititau}\ \\
For $i=1$
\[
\ta=n\left\{\D\al\circ\f^2
            -\D(\ln|h|) \circ\f\right\}, \qquad
\ta^*=n\left\{\D\al\circ\f+\D(\ln|h|)\circ\f^2\right\};
\]
For $i=4$
\[
\ta=-n\D\al(\xi)\eta
            , \qquad
\ta^*=0;
\]
For $i=5$
\[
\ta=0, \qquad \ta^*=-n\D(\ln|h|)(\xi)\eta.
\]
\end{cor}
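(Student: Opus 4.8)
The plan is to specialize the general formula \eqref{ta-ln-al} of \thmref{thm:tititau} to each of the three classes $\F_1$, $\F_4$, $\F_5$ by exploiting the specific structure of the defining tensor $F$ in each case. The key observation is that \eqref{ta-ln-al} expresses $\ta$ and $\ta^*$ as linear combinations of $\D\al$, $\D(\ln|h|)$ and their compositions with $\f$, so for each class I only need to determine how these differentials restrict relative to the almost contact structure, and then substitute.

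First I would handle $i=1$. Here the full associated 1-forms $\ta$ and $\ta^*$ live on the contact distribution; in $\F_1$ one has $\ta(\xi)=\om=0$ and both forms annihilate $\xi$, so applying $\f$ (and recalling $\f^2=-\Id+\eta\otimes\xi$) lets me rewrite the formulas. Concretely, I would compose each equation in \eqref{ta-ln-al} with $\f$ on the right and use $\D\al\circ\f\circ\f=\D\al\circ\f^2$ together with the vanishing of the $\xi$-components to solve for $\ta$ and $\ta^*$ in the stated form $\ta=n\{\D\al\circ\f^2-\D(\ln|h|)\circ\f\}$, $\ta^*=n\{\D\al\circ\f+\D(\ln|h|)\circ\f^2\}$. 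This is essentially an algebraic manipulation: apply $\f$ to \eqref{ta-ln-al} and re-collect terms, using that in $\F_1$ the relevant differentials have no $\eta$-component.

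Next I would treat $i=4$ and $i=5$ symmetrically, using the defining expressions of $F$. In $\F_4$ the tensor $F$ is governed solely by $\ta(\xi)$, so $\ta$ is proportional to $\eta$ and $\ta^*=0$; the surviving differential is $\D\al(\xi)\eta$, giving $\ta=-n\D\al(\xi)\eta$. Dually, in $\F_5$ the tensor $F$ is governed by $\ta^*(\xi)$, so $\ta=0$ and $\ta^*$ is proportional to $\eta$, producing $\ta^*=-n\D(\ln|h|)(\xi)\eta$. In each case I would start from \eqref{ta-ln-al}, impose the class-specific vanishing of one of the two forms (which forces one combination of the differentials to vanish), and read off the single remaining scalar coefficient by evaluating on $\xi$.

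The main obstacle I anticipate is bookkeeping rather than depth: I must correctly identify, for each class, which components of $\ta$ and $\ta^*$ are forced to vanish by the defining form of $F$ and by the closedness hypotheses defining $\F_i^0$, and then verify that substituting these into \eqref{ta-ln-al} is consistent (for instance, that in $\F_4$ setting $\ta^*=0$ is compatible with $\D(\ln|h|)$ contributing only through the direction annihilated in the $\ta$-formula). The delicate sign and $\f$-composition accounting — keeping track of $\f$ versus $\f^2$ and the $\eta\otimes\xi$ correction term — is where errors would creep in, so I would double-check each class by evaluating both sides on $\xi$ and on a $\f$-adapted frame vector.
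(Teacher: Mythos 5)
Your proposal is correct and follows the route the paper intends (the corollary is stated without an explicit proof): specialize \eqref{ta-ln-al} using the class-specific facts $\ta(\xi)=\ta^*(\xi)=0$ in $\F_1$, $\ta^*=0$ in $\F_4$, $\ta=0$ in $\F_5$, and simplify with $\f^2=-\Id+\eta\otimes\xi$. The only step worth making fully explicit is that for $i=4$ (resp.\ $i=5$) the vanishing of $\ta^*$ (resp.\ of $\ta$) in \eqref{ta-ln-al} yields $\D(\ln|h|)=\D\al\circ\f$ (resp.\ $\D\al=-\D(\ln|h|)\circ\f$), which substituted into the other equation leaves exactly the $\eta$-proportional term $-n\D\al(\xi)\eta$ (resp.\ $-n\D(\ln|h|)(\xi)\eta$), as you indicate.
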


\section{Examples of almost contact manifolds with B-metric}


Let us consider $\R^{2n+2}=\left\{\left(u^1,\dots,
u^{n+1};v^1,\dots, v^{n+1}\right)\ \vert\ u^i, v^i \in\R\right\}$
as a complex Riemannian manifold with the canonical complex
structure $J$ and a metric $g$, defined by
$g(x,x)=-\delta_{ij}\lm^i\lm^j+\delta_{ij}\mu^i\mu^j$, where
$x=\lm^i\frac{\partial}{\partial
u^i}+\mu^i\frac{\partial}{\partial v^i}$. Identifying the point
$p=\left(u^1,\dots, u^{n+1};v^1,\dots, v^{n+1}\right)$ in
$\R^{2n+2}$ with its positional vector $Z$, in \cite{GaMiGr} it is
given a hypersurface $S$ defined by
\[
g(Z,JZ)=0,\quad g(Z,Z)=\cosh 2t,\quad t >0.
\]
The almost contact structure is determined by the conditions:
\[
\xi=\frac{1}{\cosh t}Z,\qquad Jx=\f x+\eta(x)J\xi,
\]
where $x,\f x \in
T_pS$ and $J\xi\in (T_pS)^\perp$. Then $(S,\f,\xi,\eta,g)$ is an
almost contact B-metric manifold in the class $\F_5$.

Consequently, we characterize $(S,\f,\xi,\eta,g)$ by means of
\cite{Man6}. We compute the following quantities for the
constructed $\F_5$-manifold:
\begin{equation}\label{exmpl}
\ta=0,\qquad \eta=\sinh t \D t,\qquad
\frac{\xi\ta^*(\xi)}{2n}=-\frac{{\ta^*}^2(\xi)}{4n^2}=\frac{1}{\cosh^2
t}.
\end{equation}

In \cite{ManGri1}, it is given that the 1-form $\ta^*$ on a
$\F_5$-manifold is closed if and only if
$x\ta^*(\xi)=\xi\ta^*(\xi)\eta(x)$. By virtue of \eqref{exmpl}, we
establish that $(S,\f,\xi,\eta,g)$ belongs to the subclass
$\F_5^0$, since $\D\ta^*=0$.

The condition for the second fundamental form of the hypersurface
$S$, given in \cite{GaMiGr}, the Gauss equation (\cite{KoNo}) and
the flatness of $\R^{2n+2}$ imply the following form of the
curvature tensor of $\n$
\[
R=-\frac{1}{\cosh^2 t}\pi_2.
\]
Then, taking into account \eqref{exmpl} and the form of the
curvature tensor $K$ of the $\f$-canonical connection in $\F_5^0$
(\cite{ManGri2})
\[
K=R+\frac{\xi\ta^*(\xi)}{2n}\pi_4+\frac{{\ta^*}^2(\xi)}{4n^2}\pi_1,
\]
we obtain
\begin{equation}\label{KL1}
K=\frac{1}{\cosh t}L_1.
\end{equation}
Since $L_1$ is a $\f$-K\"ahler-type tensor, then $K$ is also a
$\f$-K\"ahler-type tensor. Therefore, we have
\[
\nu(K)=K(e_1,e_2,e_2,e_1)=\frac{1}{\cosh^2 t}, \qquad
\nu^*(K)=K^*(e_1,e_2,e_2,e_1)=0,
\]
which confirm \thmref{thm:L5} and \corref{cor:K5}.

According to \eqref{KL1}, the scalar curvatures are
\[
\tau(K)=\frac{4n(n-1)}{\cosh^2 t},\qquad \tau^*(K)=0.
\]
Then, taking into account \eqref{exmpl}, the results for
$(S,\f,\xi,\eta,g)$ confirm also \lemref{lem:dtau},
\thmref{thm:tititau} and \corref{cor:tititau}.

\bigskip

\small{ \noindent
\textsl{M. Manev, M. Ivanova\\
Department of Algebra and Geometry\\
Faculty of Mathematics and Informatics\\
University of Plovdiv\\
236 Bulgaria Blvd\\
4003 Plovdiv, Bulgaria}
\\
\texttt{mmanev@uni-plovdiv.bg, mirkaiv@uni-plovdiv.bg} }

\end{document}